\DeclareMathAlphabet{\mathpzc}{OT1}{pzc}{m}{it}
\newtheorem{theorem}{Theorem}[section]
\newtheorem{theorem-definition}[theorem]{Theorem-Definition}
\newtheorem{lemma-definition}[theorem]{Lemma-Definition}
\newtheorem{definition-prop}[theorem]{Proposition-Definition}
\newtheorem{prop}[theorem]{Proposition}
\newtheorem{lemma}[theorem]{Lemma}
\newtheorem{cor}[theorem]{Corollary}
\newtheorem{conjecture}[theorem]{Conjecture}
\newenvironment{remark}{\vspace{4pt}\noindent\textbf{Remark.}}{\qed\vspace{4pt}}
\newcommand{\LL}{\ensuremath{\mathbb{L}}}
\newcommand{\Z}{\ensuremath{\mathbb{Z}}}
\newcommand{\Q}{\ensuremath{\mathbb{Q}}}
\newcommand{\A}{\ensuremath{\mathbb{A}}}
\newcommand{\G}{\ensuremath{\mathbb{G}}}
\renewcommand{\A}{\ensuremath{\mathbb{A}}}
\numberwithin{equation}{section} \hyphenpenalty=6000
\begin{document}
\title{Trace formula for component groups of N\'eron models}
\author[Johannes Nicaise]{Johannes Nicaise}
\address{Universit\'e Lille 1\\
Laboratoire Painlev\'e, CNRS - UMR 8524\\ Cit\'e Scientifique\\59655 Villeneuve d'Ascq C\'edex \\
France} \email{johannes.nicaise@math.univ-lille1.fr}
\begin{abstract}
We study a trace formula for tamely ramified abelian varieties $A$
over a complete discretely valued field, which expresses the Euler
characteristic of the special fiber of the N\'eron model
 of $A$ in terms of the Galois action on the $\ell$-adic cohomology of $A$. If $A$ has purely additive reduction, the trace
formula yields a cohomological interpretation for the number of
connected components of the special fiber of the N\'eron model.
\end{abstract}
 \maketitle
\section{Introduction}
We denote by $R$ a complete discrete valuation ring, by $K$ its
quotient field, and by $k$ its residue field. We assume that $k$
is algebraically closed, and we denote its characteristic exponent
by $p$. We denote by $K^s$ a separable closure of $K$, and by
$K^t$ the tame closure of $K$ in $K^s$. We fix a topological
generator $\varphi$ of the tame monodromy group $G(K^t/K)$ and a
prime $\ell$ invertible in $k$. We denote by
$\chi_{\acute{e}t}(X)$ the $\ell$-adic Euler characteristic of a
$k$-variety $X$ (it is independent of $\ell$). For each $R$-scheme
$Z$, we denote by $Z_s$ its special fiber over $k$.

For each $K$-variety $X$, we denote by $H(X\times_K K^s,\Q_\ell)$
the graded $\ell$-adic cohomology space $\oplus_{i\geq
0}H^i(X\times_K K^s,\Q_\ell)$. We say that $X$ is cohomologically
tame if the wild inertia $P\subset G(K^s/K)$ acts trivially on
$H(X\times_K K^s,\Q_\ell)$. In this case, we have
$$H(X\times_K
K^t,\Q_\ell)\cong H(X\times_K K^s,\Q_\ell)^P=H(X\times_K
K^s,\Q_\ell)$$

If $X$ is a smooth and proper $K$-variety, and $\mathcal{X}$ is a
weak N\'eron model for $X$ over $R$ \cite[3.5.1]{neron}, then  the
value $\chi_{\acute{e}t}(\mathcal{X}_s)$ is independent of the
choice of weak N\'eron model, since it is a specialization of the
motivic Serre invariant $S(X)$ of $X$ (see Section
\ref{subsec-chevalley}). In \cite[6.3]{Ni-tracevar} we established
a trace formula for smooth and proper $K$-varieties $X$ that
satisfy a certain geometric tameness condition. The trace formula
states, in particular, that
$$\chi_{\acute{e}t}(\mathcal{X}_s)=Trace(\varphi\,|\,H(X\times_K
K^t,\Q_\ell)):=\sum_{i\geq 0}(-1)^i
Trace(\varphi\,|\,H^i(X\times_K K^t,\Q_\ell))$$ The value
$\chi_{\acute{e}t}(\mathcal{X}_s)$ can be seen as a measure for
the set of rational points on $X$, and the trace formula gives a
cohomological interpretation for this measure.

If $p=1$ the geometric tameness condition is void, but
unfortunately, it is quite strong if $p>1$. In
\cite[\S\,7]{Ni-tracevar} we showed that if $X$ is a geometrically
connected smooth projective $K$-curve, we can weaken the condition
to cohomological tameness, unless $X$ is a genus one curve with
additive reduction and without $K^t$-point. It seems plausible
that
 the trace formula holds
for any cohomologically tame, smooth, proper and geometrically
connected $K$-variety $X$ that has a rational point over $K^t$.

 The purpose of the present note is to
prove this for abelian varieties. Some of the results are
certainly known to experts (even though we could not find them in
the literature), but we believe that the trace formula provides a
valuable context which casts new light on the results presented
here, inserting them in a general theory. Conversely, the case of
abelian varieties constitutes a first step towards a ``motivic''
proof of the general case, avoiding the use of resolution of
singularities.

Let $A$ be an abelian variety over $K$. We denote by $\mathcal{A}$
its N\'eron model over $R$, and by $\phi_A$ the cardinality of the
group of connected components
 $\Phi_A=\mathcal{A}_s/\mathcal{A}_s^o$. We denote by $\phi_A'$ the
prime-to-$p$ part of $\phi_A$, and by $(\phi_A)_{q}$ the
$q$-primary part, for each prime number $q$. We say that $A$ is
tamely ramified if it is cohomologically tame; this is equivalent
to the property that $A$ acquires semi-abelian reduction on a
finite tame extension of $K$. The toric, unipotent and abelian
rank of $A$ are by definition the toric, unipotent and abelian
rank of $\mathcal{A}_s^o$.

We consider the following property for the abelian $K$-variety
$A$.
$$\begin{array}{ll} \mbox{(Trace)}&
\chi_{\acute{e}t}(\mathcal{A}_s)=Trace(\varphi\,|\,H(A\times_K
K^t,\Q_\ell))
\end{array}$$
%

In \cite[\S\,7.3]{Ni-tracevar} we formulated the following
conjecture.

\begin{conjecture}\label{trace-conj}
The trace formula (Trace)  holds for any tamely ramified abelian
$K$-variety $A$.
\end{conjecture}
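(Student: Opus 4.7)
The plan is to reduce (Trace) to a single identity on the component group $\Phi_A$ in the purely additive case. Since $A$ is an abelian variety, its $\ell$-adic cohomology is the exterior algebra on $H^{1}(A \times_K K^t, \Q_\ell)$, and the alternating trace of $\varphi$ therefore collapses to $\det(1 - \varphi \,|\, H^{1}(A \times_K K^t, \Q_\ell))$. Because $k$ is algebraically closed, all $\ell$-power roots of unity already lie in $K$, so the cyclotomic character is trivial on $\varphi$ and $\det(\varphi \,|\, V_\ell(A)) = 1$. Combining this with the canonical duality between $V_\ell(A)$ and $H^{1}(A \times_K K^t, \Q_\ell)$ gives
\[
Trace(\varphi \,|\, H(A \times_K K^t, \Q_\ell)) = \det(1 - \varphi \,|\, V_\ell(A)),
\]
so the conjecture becomes $\chi_{\acute{e}t}(\mathcal{A}_s) = \det(1 - \varphi \,|\, V_\ell(A))$.

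My next step is to dispose of the cases in which $A$ does not have purely additive reduction, by showing that both sides then vanish. Chevalley's theorem presents $\mathcal{A}_s^o$ as an extension of an abelian variety of dimension $a$ by the product of a torus of dimension $t$ and a unipotent group; multiplicativity of $\chi_{\acute{e}t}$ in étale torsors, together with the vanishing of the Euler characteristic on positive-dimensional tori and abelian varieties, gives
\[
\chi_{\acute{e}t}(\mathcal{A}_s) = \phi_A \cdot \chi_{\acute{e}t}(\mathcal{A}_s^o) = \begin{cases} \phi_A & \text{if } a = t = 0, \\ 0 & \text{otherwise.} \end{cases}
\]
On the cohomological side, the Néron mapping property yields the classical identification $V_\ell(A)^{I_K} \cong V_\ell(\mathcal{A}_s^o)$ for $\ell \neq p$, and hence $\dim V_\ell(A)^{I_K} = 2a + t$. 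Thus $1$ is an eigenvalue of $\varphi$ on $V_\ell(A)$ precisely when $a + t > 0$, and in that case $\det(1 - \varphi \,|\, V_\ell(A)) = 0$. The conjecture is therefore reduced to the identity
\[
\phi_A = \det(1 - \varphi \,|\, V_\ell(A))
\]
in the purely additive case.

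For this remaining identity, I would pick a finite tame Galois extension $L/K$ of degree $d$ prime to $p$ over which $A_L$ acquires semi-abelian reduction, with cyclic Galois group $H = G(L/K)$ generated by the image $\bar\varphi$ of $\varphi$. Over $L$, the monodromy filtration on $V_\ell(A)$ has graded pieces $V_\ell(T_L)$, $V_\ell(B_L)$ and $\Lambda \otimes \Q_\ell$, where $T_L$ and $B_L$ are the toric and abelian parts of the semi-abelian reduction of $A_L$ and $\Lambda$ is Grothendieck's period lattice; the group $H$ acts on these pieces through its descent action on the cocharacter group $X_*(T_L)$, on $B_L$, and on $\Lambda$. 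Purely additive reduction of $A$ over $K$ is equivalent to the vanishing of $H$-invariants on each of these three pieces, and this forces $\det(1 - \varphi \,|\, V_\ell(A))$ to be an explicit product of values of cyclotomic polynomials at $1$ dictated by the $H$-representation structure. On the geometric side, $\phi_A$ can be expressed via Edixhoven's filtration of the Néron model together with Grothendieck's monodromy pairing between $X^*(T_L)$ and $\Lambda$, as a combination of $H$-invariants, $H$-cohomology terms and the component group of $B_L$. The sub-case of potential good reduction (where $\Lambda = 0$) is cleanest and should reduce to elementary cyclotomic identities. The main obstacle, which I expect will consume most of the work, is the mixed case where both the toric and the abelian parts of $A_L$ are non-trivial: matching the cyclotomic product on the trace side with $\phi_A$ requires a careful comparison over the cyclic group $H$ of the monodromy pairing, of the $H$-action on $\Lambda$, and of the $H$-cohomology of the Néron component of $B_L$.
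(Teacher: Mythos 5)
Your reduction to the purely additive case is sound and matches the paper's: the Chevalley decomposition of $\mathcal{A}_s^o$ kills the left-hand side unless $a=t=0$, and the vanishing of $P_{\varphi}(1)=\det(1-\varphi\,|\,H^1(A\times_K K^t,\Q_\ell))$ outside the purely additive case is exactly the Lenstra--Oort criterion the paper invokes. But the heart of the matter --- the identity $\phi_A=P_{\varphi}(1)$ when $A$ has purely additive reduction --- is left as a program rather than a proof. You propose to compute both sides via the monodromy filtration over a splitting extension $L$, Grothendieck's monodromy pairing, and Edixhoven's filtration, and you yourself flag the mixed toric/abelian case as unresolved. That is precisely where the argument would bog down, and it is avoidable.

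The missing idea is to compare the two sides one prime at a time using the cohomological description of the component group: by SGA 7, IX.11.3.8, for each prime $q\neq p$ the $q$-primary part $(\phi_A)_q$ is the cardinality of the torsion of $H^1(G(K^s/K),T_qA)\cong T^t_qA/(1-\varphi)T^t_qA$. In the purely additive case $1$ is not an eigenvalue of $\varphi$ on $V^t_qA$, so this quotient is finite of cardinality $|\det(1-\varphi\,|\,V^t_qA)|_q^{-1}=|P_{\varphi}(1)|_q^{-1}$, by elementary linear algebra over $\Z_q$ (invariant factors). Independence of $P_{\varphi}$ from the auxiliary prime, plus the fact that $P_{\varphi}$ is a product of cyclotomic polynomials $\Phi_d$ with $d$ prime to $p$ (so $|P_{\varphi}(1)|_p=1$), lets you multiply over all $q\neq p$ to get $\phi_A'=P_{\varphi}(1)$. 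No monodromy pairing or case analysis on the reduction of $A_L$ is needed. Separately, your proposal never addresses the $p$-part of $\phi_A$: the equality $\phi_A=\phi_A'$ in the tame case requires the input that $\Phi_A$ is killed by $[K':K]^2$ (Liu--Lorenzini), which is prime to $p$ under tameness; without this, equating $\phi_A$ (rather than $\phi_A'$) with the $\ell$-adic determinant is unjustified.
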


In the present article, we will prove this conjecture.
%
 In fact, we show that both sides of the expression (Trace) vanish
unless $A$ has purely additive reduction (Proposition
\ref{prop-notadd}), and that in the latter case, both sides equal
$\phi_A$
 (Theorem \ref{theo-trace}).

%
%
%

\section{The trace formula}
\subsection{Chevalley decomposition in the Grothendieck
ring}\label{subsec-chevalley} We denote by $K_0(Var_k)$ the
Grothendieck ring of $k$-varieties. See, for instance,
\cite[2.1]{Ni-tracevar} for its definition. As usual, we denote by
$\LL$ the class in $K_0(Var_k)$ of the affine line $\A^1_k$.
\begin{prop}\label{decomp}
Let $G$ be a smooth connected commutative algebraic $k$-group and
consider its Chevalley decomposition
$$\begin{CD}0@>>> (L=U\times_k T)@>>> G@>\pi>> B@>>> 0\end{CD}$$
with $U$ unipotent, $T$ a torus, and $B$ an abelian variety. If we
denote by $u$ and $t$ the dimensions of $U$, resp. $T$, then
$$[G]=\LL^{u}(\LL-1)^t[B]$$
in $K_0(Var_{k})$.
\end{prop}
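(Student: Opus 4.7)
The plan is to split the Chevalley extension through the intermediate quotient $G' := G/U$ and compute the class of the two resulting torsors separately. Since $G$ is commutative, $U$ is a closed normal subgroup, so the quotient $G'$ exists as a smooth commutative $k$-group, and we obtain two short exact sequences
\[
0 \to U \to G \to G' \to 0, \qquad 0 \to T \to G' \to B \to 0,
\]
which exhibit $G \to G'$ as a $U$-torsor and $G' \to B$ as a $T$-torsor.

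The key step is to verify that both torsors are Zariski locally trivial, since once this is known the multiplicativity relation $[E] = [F]\cdot[X]$ in $K_0(Var_k)$ for a Zariski locally trivial fibration $E \to X$ with fiber $F$ yields $[G] = [U]\cdot[G'] = [U]\cdot[T]\cdot[B]$. Because $k$ is algebraically closed, $U$ admits a composition series whose successive quotients are isomorphic to $\G_a$, and $T$ is split, i.e.\ $T \cong \G_m^t$. A d\'evissage along these composition series reduces the local-triviality statements to two standard inputs: $\G_a$-torsors on any scheme are Zariski locally trivial, being classified by $H^1_{Zar}(-,\mathcal{O})$, and $\G_m$-torsors are precisely line bundles, which are Zariski locally trivial by definition.

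Finally, I would compute the factors directly. The same d\'evissage, combined with the fact that $\G_a$-torsors over affine varieties are trivial, gives $U \cong \A^u_k$ as a $k$-variety, hence $[U] = \LL^u$; and the splitting of $T$ gives $T \cong \G_m^t$ as a $k$-variety, hence $[T] = (\LL-1)^t$. Assembling the pieces yields $[G] = \LL^u(\LL-1)^t[B]$, as claimed. The main obstacle is the Zariski-local triviality of the $T$-torsor $G' \to B$: because $B$ is not affine one cannot simply invoke vanishing of coherent cohomology on the base, and must instead use the identification of $\G_m$-torsors with elements of $\mathrm{Pic}(B)$.
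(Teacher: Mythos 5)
Your proof is correct and follows essentially the same route as the paper: the paper treats $\pi\colon G\to B$ directly as an $L$-torsor and cites Milne for the fact that fppf torsors under a successive extension of $\G_m$'s and $\G_a$'s are Zariski-locally trivial, which is exactly the d\'evissage you carry out by hand after factoring through $G/U$. Your explicit reduction to $H^1_{Zar}(-,\mathcal{O})$ and $\mathrm{Pic}$ just unpacks that citation, and the rest (scissor relations for piecewise-trivial fibrations, $[U]=\LL^u$, $[T]=(\LL-1)^t$) matches the paper's argument.
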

\begin{proof}
As a $k$-variety, $U$ is isomorphic to $\A^u_k$
\cite[VII\,n$^\mathrm{o}$\,6]{serre-groupalg}. By the scissor
relations in the Grothendieck ring, it suffices to show that $\pi$
is a $L$-torsor w.r.t. the Zariski-topology. But $\pi$ is a
$L$-torsor w.r.t. the $fppf$ topology, and hence also w.r.t. the
Zariski topology because $L$ is a successive extension of $\G_m$
and $\G_a$ \cite[III.3.7+4.9]{Milne}.
\end{proof}
\begin{cor}
If $G$ is a smooth connected commutative algebraic $k$-group, then
$\chi_{\acute{e}t}(G)=1$ if $G$ is unipotent, and
$\chi_{\acute{e}t}(G)=0$ else.
\end{cor}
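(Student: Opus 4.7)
The plan is to apply $\chi_{\acute{e}t}$ to the identity $[G]=\LL^{u}(\LL-1)^{t}[B]$ furnished by Proposition \ref{decomp}. Since $\chi_{\acute{e}t}$ factors through a ring homomorphism $K_0(Var_k)\to\Z$, this yields
\[
\chi_{\acute{e}t}(G)=\chi_{\acute{e}t}(\LL)^{u}\,\bigl(\chi_{\acute{e}t}(\LL)-1\bigr)^{t}\,\chi_{\acute{e}t}(B).
\]
Computing $\chi_{\acute{e}t}(\LL)=\chi_{\acute{e}t}(\A^{1}_k)=1$, the middle factor becomes $0^{t}$.

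From here one simply reads off the three cases. If $G$ is unipotent, then $t=0$ and $\dim B=0$, so $B=\Spec k$, $[B]=1$, and the formula gives $\chi_{\acute{e}t}(G)=1^{u}=1$. If $G$ has a nontrivial torus part ($t\geq 1$), the factor $0^{t}$ forces $\chi_{\acute{e}t}(G)=0$. The remaining case is $t=0$ with $\dim B\geq 1$, where one needs $\chi_{\acute{e}t}(B)=0$ in order to conclude.

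The one step that is not a direct consequence of Proposition \ref{decomp} is the vanishing of the $\ell$-adic Euler characteristic of a positive-dimensional abelian variety $B$ of dimension $g$. I would justify this in one line by invoking the standard identification $H^{i}(B\times_k \overline{k},\Q_\ell)\cong \wedge^{i}H^{1}(B\times_k\overline{k},\Q_\ell)$, which gives $\dim_{\Q_\ell}H^{i}=\binom{2g}{i}$, whence
\[
\chi_{\acute{e}t}(B)=\sum_{i=0}^{2g}(-1)^{i}\binom{2g}{i}=(1-1)^{2g}=0.
\]
Putting the three cases together yields the corollary. The only mild subtlety is the abelian variety input, everything else being a formal consequence of the Chevalley decomposition in $K_0(Var_k)$.
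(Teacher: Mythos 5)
Your proof is correct and follows exactly the route the paper intends: the corollary is stated without proof as an immediate consequence of Proposition \ref{decomp}, obtained by applying the ring morphism $\chi_{\acute{e}t}$ to $[G]=\LL^{u}(\LL-1)^{t}[B]$ and using $\chi_{\acute{e}t}(\LL)=1$ together with the vanishing of $\chi_{\acute{e}t}$ for a positive-dimensional abelian variety. Your explicit justification of that last vanishing via $H^{i}\cong\wedge^{i}H^{1}$ is a standard fact and fills in the only detail the paper leaves implicit.
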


\begin{cor}\label{varphi}
For each abelian variety $A$ over $K$, we have
$\chi_{\acute{e}t}(\mathcal{A}_s)=\phi_A$ if $A$ has purely
additive reduction, and $\chi_{\acute{e}t}(\mathcal{A}_s)=0$ else.
\end{cor}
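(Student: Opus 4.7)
The plan is to reduce the statement to the previous corollary by peeling off the component group. First I would observe that since $k$ is algebraically closed, each connected component of $\mathcal{A}_s$ contains a $k$-rational point, and translation by such a point identifies that component with the identity component $\mathcal{A}_s^o$ as a $k$-variety. Hence $\mathcal{A}_s$ is isomorphic, as a $k$-scheme, to a disjoint union of $\phi_A$ copies of $\mathcal{A}_s^o$, and in the Grothendieck ring we get
$$[\mathcal{A}_s]=\phi_A\cdot [\mathcal{A}_s^o],$$
so that $\chi_{\acute{e}t}(\mathcal{A}_s)=\phi_A\cdot \chi_{\acute{e}t}(\mathcal{A}_s^o)$.

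Next I would apply the preceding corollary to the smooth connected commutative algebraic $k$-group $G=\mathcal{A}_s^o$. This gives $\chi_{\acute{e}t}(\mathcal{A}_s^o)=1$ exactly when $\mathcal{A}_s^o$ is unipotent, and $\chi_{\acute{e}t}(\mathcal{A}_s^o)=0$ otherwise. By the definition of the toric, unipotent and abelian ranks recalled in the introduction, $\mathcal{A}_s^o$ is unipotent if and only if $A$ has purely additive reduction (i.e.\ its toric and abelian ranks vanish). Combining the two displayed relations yields $\chi_{\acute{e}t}(\mathcal{A}_s)=\phi_A$ in the purely additive case and $\chi_{\acute{e}t}(\mathcal{A}_s)=0$ otherwise.

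There is no real obstacle here; the only point that deserves a moment of care is the identification of the non-identity components with $\mathcal{A}_s^o$, which relies on the existence of $k$-rational points in each component and hence on our standing assumption that $k$ is algebraically closed. Everything else is an immediate consequence of Proposition \ref{decomp} and its corollary.
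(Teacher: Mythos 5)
Your argument is correct and is exactly the reasoning the paper leaves implicit: the paper states this as an immediate corollary of the preceding one, relying on the same two facts you spell out, namely that $\mathcal{A}_s$ is a disjoint union of $\phi_A$ translates of $\mathcal{A}_s^o$ (each component having a $k$-point since $k$ is algebraically closed) and that $\chi_{\acute{e}t}(\mathcal{A}_s^o)$ is $1$ or $0$ according to whether $\mathcal{A}_s^o$ is unipotent, i.e.\ whether $A$ has purely additive reduction.
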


Recall that the motivic Serre invariant $S(X)$ of a smooth and
proper $K$-variety $X$ is defined as
$$S(X)=[\mathcal{X}_s]\in K_0(Var_k)/(\LL-1)$$ where $\mathcal{X}$ is
any weak N\'eron model of $X$ over $R$. This definition is
independent of the choice of weak N\'eron model
\cite[4.5.3]{motrigid}\cite[5.1]{Ni-tracevar}. The motivic Serre
invariant may be seen as a measure for the set of rational points
on $X$, and the trace formula in \cite[6.3]{Ni-tracevar} gives a
cohomological interpretation of this measure.

\begin{cor}\label{cor-motserre}
Let $A$ be an abelian variety over $K$, denote by $u$ and $t$ its
unipotent, resp. toric rank, and denote by $B$ the abelian
quotient in the Chevalley decomposition of $\mathcal{A}_s^o$. Then
we have the following equalities in $K_0(Var_k)/(\LL-1)$~:
$$S(A)=\left\{\begin{array}{ll}0&\mbox{ iff }t>0\\ \phi_A\cdot [B] &\mbox{ iff }t=0 \end{array}\right.$$
\end{cor}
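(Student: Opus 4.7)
The plan is to reduce this immediately to Proposition \ref{decomp} via the component-group decomposition of $\mathcal{A}_s$.

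First I would observe that, because $\mathcal{A}_s$ is a smooth commutative group scheme over the algebraically closed field $k$, every connected component is a translate of the identity component $\mathcal{A}_s^o$, hence isomorphic to it as a $k$-variety. Since there are exactly $\phi_A$ such components, the scissor relations in $K_0(Var_k)$ give
$$[\mathcal{A}_s]=\phi_A\cdot [\mathcal{A}_s^o].$$

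Next I would apply Proposition \ref{decomp} to the smooth connected commutative algebraic $k$-group $G=\mathcal{A}_s^o$, whose Chevalley decomposition has unipotent rank $u$, toric rank $t$, and abelian quotient $B$. This yields
$$[\mathcal{A}_s^o]=\LL^{u}(\LL-1)^{t}[B]$$
in $K_0(Var_k)$, hence
$$[\mathcal{A}_s]=\phi_A\cdot \LL^{u}(\LL-1)^{t}[B].$$

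Finally I would pass to $K_0(Var_k)/(\LL-1)$. Since the N\'eron model $\mathcal{A}$ is, in particular, a weak N\'eron model of $A$, the class $[\mathcal{A}_s]$ represents $S(A)$ modulo $(\LL-1)$. Modulo $(\LL-1)$, the factor $\LL^{u}$ reduces to $1$, and $(\LL-1)^{t}$ reduces to $1$ if $t=0$ and to $0$ if $t>0$. This gives exactly the two cases of the statement.

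There is no real obstacle here: the only step that requires any care is the first one, namely the identification $[\mathcal{A}_s]=\phi_A\cdot[\mathcal{A}_s^o]$, which uses that $k$ is algebraically closed so that each component of the group scheme $\mathcal{A}_s$ admits a $k$-point and is thus a trivial $\mathcal{A}_s^o$-torsor. Everything else is a direct substitution into Proposition \ref{decomp} followed by reduction modulo $\LL-1$.
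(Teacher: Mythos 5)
Your computation of $S(A)$ is correct, and it is precisely the part of the argument that the paper treats as immediate: each connected component of the smooth group scheme $\mathcal{A}_s$ over the algebraically closed field $k$ has a rational point and is therefore a translate of $\mathcal{A}_s^o$, so $[\mathcal{A}_s]=\phi_A\cdot[\mathcal{A}_s^o]$, and Proposition \ref{decomp} gives $S(A)=\phi_A\cdot\LL^{u}(\LL-1)^{t}[B]$, which reduces modulo $\LL-1$ to $0$ when $t>0$ and to $\phi_A\cdot[B]$ when $t=0$.

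However, the corollary asserts two \emph{iff}'s, and your argument only establishes the implications from the value of $t$ to the value of $S(A)$. For the converse direction --- that $S(A)=0$ forces $t>0$, equivalently that $\phi_A\cdot[B]\neq 0$ in $K_0(Var_k)/(\LL-1)$ when $t=0$ --- you must rule out that the class $\phi_A\cdot[B]$ dies in the quotient ring. This is not automatic: $K_0(Var_k)/(\LL-1)$ is a poorly understood ring, and a priori a positive integer multiple of the class of an abelian variety could vanish there. This non-vanishing is in fact the entire content of the paper's proof (which opens with ``we only have to show that $\phi_A\cdot[B]\neq 0$''): it applies the Poincar\'e polynomial specialization $P(\cdot;-1):K_0(Var_k)/(\LL-1)\rightarrow\Z$, under which $\phi_A\cdot[B]$ maps to $\phi_A\cdot\sum_{i\geq 0}b_i(B)=\phi_A\cdot 4^{\mathrm{dim}(B)}\neq 0$. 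To complete your proof you need to add a realization argument of this kind, i.e.\ exhibit a ring morphism out of $K_0(Var_k)/(\LL-1)$ that does not kill $\phi_A\cdot[B]$.
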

\begin{proof}
We only have to show that $\phi_A\cdot [B]\neq 0$ in
$K_0(Var_k)/(\LL-1)$. Consider the Poincar\'e polynomial
$$P(\cdot;T):K_0(Var_k)\rightarrow \Z[T]:[X]\mapsto P(X;T)$$
from \cite[2.10]{Ni-tracevar}. It is a morphism of rings, mapping
the class $[Y]$ of a smooth and proper $k$-variety $Y$ to the
polynomial
$$P(Y;T)=\sum_{i\geq 0}(-1)^i b_i(Y)T^i$$ with $b_i(Y)$ the $i$-th
$\ell$-adic Betti number of $Y$ (it is independent of $\ell$). We
have $P(\LL;T)=T^2-1$, so evaluating at $T=-1$ we obtain a ring
morphism
$$P(\cdot;-1):K_0(Var_k)/(\LL-1)\rightarrow \Z:[X]\mapsto P(X;-1)$$
We have
$$P(\phi_A\cdot [B];-1)=\phi_A\cdot \sum_{i\geq 0}b_i(B)=\phi_A\cdot
4^{\mathrm{dim}(B)}\neq 0$$
\end{proof}


\subsection{Trace formula for component groups}
Assume that $A$ is an abelian $K$-variety of dimension $g$. We
denote by $P_{\varphi}(T)$ the characteristic polynomial
$$P_{\varphi}(T)=det(T\cdot Id-\varphi\,|\,H^1(A\times_K K^t,\Q_\ell))$$
of $\varphi$ on $H^1(A\times_K K^t,\Q_\ell)$. The polynomial
$P_{\varphi}(T)$ belongs to $\Z[T]$, and it is independent of
$\ell$, by \cite[2.10]{lorenzini-charpol}. By quasi-unipotency of
the $G(K^s/K)$-action on $T_\ell A$ \cite[IX.4.3]{sga7a} we know
that the zeroes of $P_{\varphi}(T)$ are roots of unity, so that
$P_{\varphi}(T)$ is a product of cyclotomic polynomials. Since the
pro-$p$-part of $G(K^t/K)$ is trivial, the orders of the zeroes of
$P_{\varphi}(T)$ as roots of unity are prime to $p$.

If $A$ is tamely ramified, it follows immediately from the
canonical isomorphism
$$H(A\times_K K^t,\Q_\ell)\cong\bigwedge
H^1(A\times_K K^t,\Q_\ell)$$ that we have
$$Trace(\varphi\,|\,H(A\times_K
K^t,\Q_\ell))=P_{\varphi}(1)$$

\begin{prop}\label{prop-notadd}
If $A$ is a tamely ramified abelian $K$-variety that does not have
purely additive reduction, then
$$\chi_{\acute{e}t}(\mathcal{A}_s)= Trace(\varphi\,|\,H(A\times_K
K^t,\Q_\ell))=0$$ and Conjecture \ref{trace-conj} holds for $A$.
\end{prop}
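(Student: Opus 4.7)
The vanishing $\chi_{\acute{e}t}(\mathcal{A}_s)=0$ is immediate from Corollary \ref{varphi}, since $\mathcal{A}_s^o$ is not unipotent by hypothesis. The content is therefore the vanishing of the trace, and the plan is to reduce this to producing a non-zero inertia-invariant class in the Tate module of $A$.

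By the exterior algebra identification recalled just before the statement, $Trace(\varphi\,|\,H(A\times_K K^t,\Q_\ell))=P_\varphi(1)$, so it suffices to show that $1$ is a root of $P_\varphi(T)$, i.e.\ that $\varphi$ has a non-zero fixed vector in $H^1(A\times_K K^t,\Q_\ell)$. By cohomological tameness and the fact that $\varphi$ topologically generates $G(K^t/K)$, this fixed space coincides with $H^1(A\times_K K^s,\Q_\ell)^{G(K^s/K)}$; since the inertia action on $V_\ell A$ is quasi-unipotent, passing to the dual preserves the dimension of the invariants, and it is enough to exhibit a non-zero element of $(V_\ell A)^{G(K^s/K)}$.

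For this I invoke the classical N\'eron-model identification
$$(T_\ell A)^{G(K^s/K)}\;\cong\; T_\ell(\mathcal{A}_s^o)$$
(see \cite[IX]{sga7a}), which in the generality we need holds because $\ell$ is invertible in $k$: the N\'eron mapping property identifies $A[\ell^n](K^s)^{G(K^s/K)}$ with $\mathcal{A}(R^{sh})[\ell^n]$, and the formal completion of $\mathcal{A}^o$ along the unit section has no prime-to-$p$ torsion, so specialization to $\mathcal{A}_s^o(k)[\ell^n]$ is bijective. Applying Proposition \ref{decomp} and using that smooth commutative unipotent $k$-groups have no $\ell$-power torsion, the right-hand side has $\Q_\ell$-rank $2a+t$, where $a$ and $t$ denote the abelian and toric ranks of $\mathcal{A}_s^o$. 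The assumption that $A$ does not have purely additive reduction is precisely $a+t>0$, so $(V_\ell A)^{G(K^s/K)}\neq 0$ and the trace vanishes. The one potentially delicate step is the identification $(T_\ell A)^{G(K^s/K)}\cong T_\ell(\mathcal{A}_s^o)$ when $A$ is not semi-abelian; once that is secured, the rest is a bookkeeping application of the Chevalley decomposition.
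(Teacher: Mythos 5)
Your argument is correct, but it takes a different route from the paper at the key step. The paper disposes of the right-hand side in one line by citing \cite[1.3]{Lenstra-Oort} for the equivalence ``$P_\varphi(1)=0$ iff $A$ does not have purely additive reduction''; you instead reprove the direction you need by unwinding what is behind that reference, namely the identification $(T_\ell A)^{G(K^s/K)}\cong T_\ell(\mathcal{A}_s^o)$ from \cite{sga7a}, Exp.\ IX, together with the computation that this module has rank $2a+t$ via the Chevalley decomposition (Proposition \ref{decomp}) and the absence of $\ell$-torsion in unipotent groups. Your reduction steps are all sound: tameness identifies $H^1(A\times_K K^t,\Q_\ell)$ with $H^1(A\times_K K^s,\Q_\ell)$, the $\varphi$-invariants with the full inertia invariants, and dualizing does not change whether $1$ is an eigenvalue (quasi-unipotence is not actually needed for that last point --- for a single operator the dimensions of invariants and coinvariants agree, and the eigenvalues of the contragredient are the inverses). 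What your approach buys is self-containedness modulo the standard N\'eron-model fact, and it makes transparent that only the implication ``$a+t>0\Rightarrow P_\varphi(1)=0$'' is used here; what it costs is that the converse implication, which the paper also extracts from \cite[1.3]{Lenstra-Oort} and uses in Theorem \ref{theo-trace}, is not delivered by your argument as written (though it follows from the same identification). The treatment of the left-hand side via Corollary \ref{varphi} is identical to the paper's.
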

\begin{proof}
It is well-known that $P_{\varphi}(1)=0$ iff $A$ does not have
purely additive reduction (see for instance
\cite[1.3]{Lenstra-Oort}) so the right hand side of trace formula
vanishes. The left hand side vanishes as well, by Corollary
\ref{varphi}.
\end{proof}


In order to investigate the case where $A$ has purely additive
reduction, we'll need some elementary lemmas.
\begin{lemma}\label{cyclo}
Fix an integer $d>1$ and let $\Phi_d(T)\in \Z[T]$ be the
cyclotomic polynomial whose roots are the primitive $d$-th roots
of unity. Then $\Phi_d(1)\in \Z_{>0}$ and $\Phi_d(1)| d$.
\end{lemma}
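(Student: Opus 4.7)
The plan is to use two standard ingredients: the factorization $T^n - 1 = \prod_{e\mid n}\Phi_e(T)$ and complex-conjugate pairing of the roots. First I would recall that $\Phi_d(T) \in \Z[T]$ (this is classical, so the value $\Phi_d(1)$ is at least an integer), and then write
$$\Phi_d(1) = \prod_{\zeta}(1-\zeta),$$
where the product ranges over the primitive $d$-th roots of unity. Since $d>1$, none of the factors vanishes.

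For the positivity, I would argue by pairing. If $\zeta$ is a primitive $d$-th root of unity, then so is $\bar\zeta$. For non-real $\zeta$ this pairs $(1-\zeta)(1-\bar\zeta) = |1-\zeta|^2 > 0$, so these pairs contribute a positive real factor. The only real primitive $d$-th root that can occur is $\zeta = -1$ (when $d=2$), in which case the factor is $1-(-1) = 2 > 0$. Hence $\Phi_d(1) \in \Z_{>0}$.

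For the divisibility, I would use the factorization
$$T^d - 1 = \prod_{e\mid d} \Phi_e(T) = (T-1)\prod_{\substack{e\mid d \\ e>1}}\Phi_e(T).$$
Dividing by $T-1$ gives $1+T+\cdots+T^{d-1} = \prod_{e\mid d,\,e>1}\Phi_e(T)$, and evaluating at $T=1$ yields
$$d = \prod_{\substack{e\mid d\\ e>1}}\Phi_e(1).$$
By the previous paragraph each factor on the right is a positive integer, so $\Phi_d(1)$ divides $d$.

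There is no real obstacle here; the only subtlety is making sure the positivity argument handles the real primitive root $\zeta = -1$ separately, which only arises in the case $d=2$.
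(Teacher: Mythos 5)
Your proof is correct. The divisibility half is exactly the paper's argument: evaluate $\frac{T^d-1}{T-1}=\prod_{e\mid d,\,e>1}\Phi_e(T)$ at $T=1$ to get $d=\prod_{e\mid d,\,e>1}\Phi_e(1)$. Where you diverge is in establishing positivity: the paper extracts it from that same identity by induction on $d$ (the base case $d=2$ is clear, and in the identity for $d$ all factors $\Phi_e(1)$ with $1<e<d$ are positive by the induction hypothesis, while the product $d$ is positive, forcing $\Phi_d(1)>0$), whereas you prove it directly over $\C$ by writing $\Phi_d(1)=\prod_{\zeta}(1-\zeta)$ and pairing each non-real primitive root with its conjugate, handling the lone real case $\zeta=-1$ (i.e.\ $d=2$) separately. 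Your route is self-contained for each fixed $d$ and arguably more transparent about \emph{why} the value is positive; the paper's induction is more economical in that a single identity yields both conclusions at once and never leaves $\Z[T]$. Both are complete; your only care point, the real root $\zeta=-1$, is correctly isolated.
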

\begin{proof}
We proceed by induction on $d$. For $d=2$ the result is clear, so
assume that it holds for each value $d'$ with $1<d'<d$. We can
write
$$\frac{T^d-1}{T-1}=\prod_{e|d,\,e>1}\Phi_e(T)$$ and evaluating at
$T=1$ we get
$$d=\prod_{e|d,\,e>1}\Phi_e(1)$$ so $\Phi_d(1)|d$.
By the induction hypothesis, $\Phi_e(1)>0$ for $1<e<d$, so
$\Phi_d(1)>0$ as well.
\end{proof}
\begin{lemma}\label{linalg}
Let $q$ be a prime, $M$ a free $\Z_q$-module of finite type, and
$\alpha$ an endomorphism of $M$. Then $M/\alpha M$ is torsion iff
$\alpha$ is an automorphism on $M\otimes_{\Z_q}\Q_q$. In this
case, the cardinality $\sharp (M/\alpha M)$ of $M/\alpha M$
satisfies
$$\sharp (M/\alpha M)=|det(\alpha\,|\,M\otimes_{\Z_q}\Q_q)|_q^{-1}$$
where $|\cdot|_q$ denotes the $q$-adic absolute value.
\end{lemma}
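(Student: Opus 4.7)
The plan is to handle the two assertions by reducing them to elementary facts about finitely generated modules over the principal ideal domain $\Z_q$.

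For the equivalence, I would observe that $M/\alpha M$ is a finitely generated $\Z_q$-module, hence it is torsion (equivalently, finite) if and only if $(M/\alpha M)\otimes_{\Z_q}\Q_q = 0$. Since tensor product is right exact, this cokernel identifies with the cokernel of $\alpha$ acting on the finite-dimensional $\Q_q$-vector space $V := M\otimes_{\Z_q}\Q_q$. Vanishing of this cokernel is equivalent to surjectivity of $\alpha$ on $V$, which for a $\Q_q$-linear endomorphism of a finite-dimensional space is equivalent to $\alpha$ being an automorphism of $V$.

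For the cardinality formula, I would invoke the elementary divisor theorem (Smith normal form) over the PID $\Z_q$: there exist $\Z_q$-bases of $M$ (source and target) in which $\alpha$ is represented by a diagonal matrix $\mathrm{diag}(q^{a_1},\ldots,q^{a_n})$ with $a_i\in\Z_{\geq 0}$, provided $\alpha$ is injective (which is automatic once we are in the torsion quotient case, since $\alpha\otimes\Q_q$ is then an automorphism). Then
$$M/\alpha M\;\cong\;\bigoplus_{i=1}^n \Z_q/q^{a_i}\Z_q,$$
so $\sharp(M/\alpha M)=q^{\sum_i a_i}$. On the other hand, changing bases multiplies the determinant by a unit of $\Z_q$, so the $q$-adic valuation of $\det(\alpha\,|\,V)$ equals $\sum_i a_i$, giving $|\det(\alpha\,|\,V)|_q=q^{-\sum_i a_i}$.

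There is no real obstacle here: the statement is standard $\Z_q$-module linear algebra, and the only point to be slightly careful about is that the Smith normal form legitimately applies in this setting (which it does, since $\Z_q$ is a discrete valuation ring and $M$ is free of finite rank). Combining the two displayed equalities yields the desired formula.
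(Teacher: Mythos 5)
Your proof is correct and follows essentially the same route as the paper's: reduce the torsion statement to vanishing of $(M/\alpha M)\otimes_{\Z_q}\Q_q$ via right exactness, then compute the cardinality from the invariant factors (Smith normal form) of $\alpha$ over the discrete valuation ring $\Z_q$, comparing with the determinant up to a unit. No gaps.
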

\begin{proof}
The module $M/\alpha M$ is torsion iff  $(M/\alpha
M)\otimes_{\Z_q}\Q_q=0$, i.e. iff $\alpha$ is a surjective and,
hence, bijective endomorphism on $M\otimes_{\Z_q}\Q_q$. In this
case, we have
$$M/\alpha M\cong \Z_q/q^{c_1}\Z_q\oplus\ldots \oplus \Z_q/q^{c_r}\Z_q$$
where $q^{c_1},\ldots,q^{c_r}$ are the invariant factors of
$\alpha$ on $M$. Since $det(\alpha\,|\,M\otimes_{\Z_q}\Q_q)$
equals $q^{c_1+\ldots+c_r}$ times a unit in $\Z_q$, we find
$$\sharp (M/\alpha M)=q^{c_1+\ldots+c_r}=|det(\alpha\,|\,M\otimes_{\Z_q}\Q_q)|_q^{-1}$$
\end{proof}
\begin{theorem}\label{theo-trace}
If $A$ is an abelian $K$-variety with
 purely additive reduction, then
 $$P_{\varphi}(1)=\phi'_A$$
 If, moreover, $A$ is tamely ramified, then
$$Trace(\varphi\,|\,H(A\times_K
K^t,\Q_\ell))=\phi'_A=\phi_A$$
\end{theorem}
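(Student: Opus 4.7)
My plan is to reduce the first assertion to a prime-by-prime cokernel computation using Lemmas~\ref{cyclo} and~\ref{linalg}, and then to deduce the second assertion from this together with the exterior-power identity and a known vanishing of the $p$-primary part of the N\'eron component group for tame purely additive abelian varieties.

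For the first assertion, observe that $P_\varphi(T)$ is a product of cyclotomic polynomials $\Phi_d(T)$ with $d$ prime to $p$, so Lemma~\ref{cyclo} gives $P_\varphi(1)\in\Z_{>0}$ with $|P_\varphi(1)|_p=1$. Purely additive reduction forces $P_\varphi(1)\neq 0$, so $Id-\varphi$ is invertible on $H^1(A\times_K K^t,\Q_q)$ for every prime $q$. Hence it suffices to check $|P_\varphi(1)|_q^{-1}=(\phi_A)_q$ for each prime $q\neq p$. Fix such a $q$ and set $M_q=H^1(A\times_K K^t,\Z_q)$, a $\varphi$-stable $\Z_q$-lattice. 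Lemma~\ref{linalg} applied to $Id-\varphi$ on $M_q$ yields
$$|P_\varphi(1)|_q^{-1}=\sharp\bigl(M_q/(Id-\varphi)M_q\bigr).$$
The remaining step is to identify this cokernel with $(\phi_A)_q$. For this I would use the N\'eron mapping property: for each $n$ prime to $p$ one has $\mathcal{A}_s[n](k)=A[n](K^s)^{G(K^s/K)}$, and since $\mathcal{A}_s^o$ is unipotent (purely additive reduction) it has no prime-to-$p$ torsion, so the exact sequence $0\to\mathcal{A}_s^o[n]\to\mathcal{A}_s[n]\to\Phi_A[n]\to 0$ degenerates to $\Phi_A[n]\cong A[n](K^s)^{G(K^s/K)}$. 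Taking $n=q^m$, passing to the inverse limit, and dualizing (using that $P$-invariants is exact on $\Z_q$-modules for $q\neq p$, so commutes with the passage from $T_qA$ to the dual lattice $M_q$) produces $(\phi_A)_q\cong M_q/(Id-\varphi)M_q$.

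For the second assertion, assume also that $A$ is tamely ramified. The exterior-power formula $H\cong\bigwedge H^1$ recorded just before Proposition~\ref{prop-notadd} gives $Trace(\varphi\,|\,H(A\times_K K^t,\Q_\ell))=P_\varphi(1)$, so combining with the first assertion reduces the claim to showing $\phi'_A=\phi_A$, i.e.\ $(\phi_A)_p=1$. This is classical (cf.\ Edixhoven): passing to a tame finite extension $K'/K$ of degree prime to $p$ over which $A$ acquires semi-abelian reduction, functoriality of N\'eron models under tame base change together with the purely additive hypothesis over $K$ forces the $p$-part of $\Phi_A$ to vanish.

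The main obstacle is the cokernel identification $(\phi_A)_q\cong M_q/(Id-\varphi)M_q$ in the second paragraph, which requires careful use of the N\'eron mapping property, quasi-unipotency of the wild inertia action on $T_qA$, and Pontryagin duality between the Tate module and the first cohomology lattice; the rest of the argument is either immediate from the preparatory lemmas or can be black-boxed from standard references on N\'eron models.
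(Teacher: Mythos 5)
Your argument follows the paper's proof essentially step for step: reduce the second assertion to the first via the fact that tame ramification kills the $p$-part of $\Phi_A$, compute $(\phi_A)_q=|P_\varphi(1)|_q^{-1}$ for each prime $q\neq p$ using the purely additive hypothesis (Lenstra--Oort) and Lemma \ref{linalg}, and assemble the product over all primes using Lemma \ref{cyclo} to control the place $p$. The only differences are cosmetic: where you sketch the identification of $(\phi_A)_q$ with the torsion of $H^1(G(K^t/K),T^t_qA)\cong T^t_qA/(Id-\varphi)T^t_qA$ via the N\'eron mapping property, the paper simply cites \cite[IX.11.3.8]{sga7a}; it cites \cite[1.8]{liu-lorenzini} rather than \cite{edix-liu-lor} for $\phi_A=\phi'_A$; and it works with the Tate module $T^t_qA$ rather than the dual lattice $H^1(A\times_K K^t,\Z_q)$, which changes nothing since the relevant determinants agree.
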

\begin{proof}
By \cite[1.8]{liu-lorenzini}, the component group $\Phi_A$ is
killed by $[K':K]^2$, with $K'$ the minimal extension of $K$ where
$A$ acquires semi-abelian reduction. If $A$ is tamely ramified,
then $[K':K]$ is prime to $p$, so that $\phi_A=\phi'_A$.

Hence, it suffices to prove the first assertion. Let $q$ be a
prime different from $p$, and denote by $T^t_qA=(T_q A)^P$ the
tame $q$-adic Tate module of $A$. By \cite[IX.11.3.8]{sga7a}, the
$q$-primary part $(\phi_A)_{q}$ of $\phi_A$ equals the cardinality
of the torsion part of
$$H^1(G(K^s/K),T_qA)\cong H^1(G(K^t/K),T^t_{q}A)\cong T^t_q A/(Id-\varphi)T^t_q A$$
 Since $A$ has purely additive reduction, $1$ is not an eigenvalue
 of $\varphi$ on $V^t_qA=T^t_qA\otimes_{\Z_q}\Q_q$ by \cite[1.3]{Lenstra-Oort},
 so by Lemma \ref{linalg} the $\Z_q$-module
$T^t_q A/(Id-\varphi)T^t_q A$ is torsion, and
 its cardinality is given by
$$|det(1-\varphi\,|\,V^t_qA)|_q^{-1}$$ where $|\cdot|_q$ denotes the
$q$-adic absolute value. Since $P_{\varphi}(T)$ is independent of
$\ell$, we find
\begin{equation}\label{eq}(\phi_A)_{q}=|P_{\varphi}(1)|_{q}^{-1}\end{equation}
for each prime $q\neq p$. Moreover, the characteristic of $k$ does
not divide the order (as a root of unity) of any zero of
$P_{\varphi}(T)$, so if $p>1$, we have $|P_{\varphi}(1)|_p=1$ by
Lemma \ref{cyclo}.

Hence, taking the product of (\ref{eq}) over all primes $q\neq p$,
we get
$$\phi'_A=\prod_{q\neq p}|P_{\varphi}(1)|_{q}^{-1}=\prod_{r\,\mathrm{prime}}|P_{\varphi}(1)|_{r}^{-1}=|P_{\varphi}(1)|=P_{\varphi}(1)$$
where the last equality follows from Lemma \ref{cyclo}. This
concludes the proof.
\end{proof}
As a consequence, we obtain a proof of Conjecture
\ref{trace-conj}.
\begin{cor}
The trace formula (Trace) holds for every tamely ramified abelian
$K$-variety $A$.
\end{cor}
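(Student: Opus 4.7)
The plan is to prove the first assertion $P_\varphi(1) = \phi'_A$ directly, and then derive the second from it using a result of Liu--Lorenzini on the exponent of the component group. Under tame ramification, the minimal extension $K'/K$ over which $A$ acquires semi-abelian reduction has degree prime to $p$. Since \cite[1.8]{liu-lorenzini} tells us that $[K':K]^2$ kills $\Phi_A$, the whole group $\Phi_A$ has order prime to $p$ in the tame case, so $\phi_A = \phi'_A$. Combined with the canonical exterior-algebra isomorphism for tame abelian varieties (already displayed in the paper) giving $\mathrm{Trace}(\varphi\,|\,H(A\times_K K^t,\Q_\ell)) = P_\varphi(1)$, the second assertion reduces to the first.

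For the first assertion, I would work one prime $q \neq p$ at a time. Using the theorem in \cite[IX.11.3.8]{sga7a}, the $q$-primary part $(\phi_A)_q$ is the cardinality of the torsion subgroup of $H^1(G(K^s/K), T_qA)$. Since $A$ is tamely ramified at the $q$-adic level (the wild inertia acts trivially on $T_q A$ for $q\ne p$), I would identify this with $H^1(G(K^t/K), T^t_q A)$, and then with the cokernel $T^t_q A / (\mathrm{Id}-\varphi)T^t_q A$, using that $G(K^t/K)$ is topologically generated by $\varphi$. Since $A$ has purely additive reduction, \cite[1.3]{Lenstra-Oort} guarantees $1$ is not an eigenvalue of $\varphi$ on $V^t_q A = T^t_q A \otimes \Q_q$, so by Lemma \ref{linalg}, the cokernel is torsion (hence equals its torsion part) and its cardinality is $|\det(1-\varphi \mid V^t_q A)|_q^{-1}$. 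Because $P_\varphi(T)$ is $\ell$-independent, this determinant is $P_\varphi(1)$, yielding
$$(\phi_A)_q = |P_\varphi(1)|_q^{-1}$$
for every $q \neq p$.

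To finish, I would take the product over all primes $q\neq p$, which gives
$$\phi'_A = \prod_{q \neq p} |P_\varphi(1)|_q^{-1}.$$
The remaining step is to handle the $p$-adic factor. Since $\varphi$ generates the tame inertia, its image has order prime to $p$, so the eigenvalues of $\varphi$ on $H^1$ are roots of unity of order prime to $p$; hence $P_\varphi(T)$ factors as a product of cyclotomic polynomials $\Phi_d(T)$ with $\gcd(d,p)=1$. By Lemma \ref{cyclo}, each $\Phi_d(1)$ is a positive integer dividing $d$, so $P_\varphi(1)$ is a positive integer whose $p$-adic valuation vanishes, i.e.\ $|P_\varphi(1)|_p = 1$. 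Therefore the product over $q\ne p$ extends to a product over all primes and, by positivity, recovers $P_\varphi(1)$ itself, giving $\phi'_A = P_\varphi(1)$.

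The main obstacle I anticipate is the identification with the cokernel of $\mathrm{Id}-\varphi$ on the tame Tate module: one must be careful that passing from the full inertia cohomology to the tame version does not lose $q$-torsion, which is the point of working with $q\ne p$ and invoking the SGA 7 computation. Once this comparison is in hand, Lemmas \ref{cyclo} and \ref{linalg} handle the rest essentially mechanically.
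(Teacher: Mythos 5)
Your argument correctly reproduces, essentially step for step, the paper's proof of Theorem \ref{theo-trace}: the reduction $\phi_A=\phi_A'$ via \cite[1.8]{liu-lorenzini}, the prime-by-prime computation of $(\phi_A)_q$ through \cite[IX.11.3.8]{sga7a} and Lemma \ref{linalg}, and the $p$-adic factor via Lemma \ref{cyclo} are exactly the steps the paper takes. But as a proof of the stated Corollary it has a genuine gap: the property (Trace) is the equality $\chi_{\acute{e}t}(\mathcal{A}_s)=Trace(\varphi\,|\,H(A\times_K K^t,\Q_\ell))$ for \emph{every} tamely ramified $A$, whereas you compute only the right-hand side, and only under the extra hypothesis that $A$ has purely additive reduction.

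Two pieces are missing. First, the left-hand side is never addressed: in the purely additive case one needs $\chi_{\acute{e}t}(\mathcal{A}_s)=\phi_A$, which comes from the Chevalley decomposition of $\mathcal{A}_s^o$ (the identity component is unipotent, hence isomorphic to an affine space as a $k$-variety, so each of the $\phi_A$ components contributes Euler characteristic $1$); this is Corollary \ref{varphi}. Second, the case where $A$ does not have purely additive reduction is not treated at all. There the toric or abelian rank of $\mathcal{A}_s^o$ is positive, so $\chi_{\acute{e}t}(\mathcal{A}_s)=0$, again by the Chevalley decomposition, while $P_\varphi(1)=0$ by \cite[1.3]{Lenstra-Oort} --- the same reference you invoke in the opposite direction; this is Proposition \ref{prop-notadd}. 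With these two additions, your computation $P_\varphi(1)=\phi_A'=\phi_A$ does complete the proof of the Corollary.
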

\begin{proof}
Combine Corollary \ref{varphi}, Proposition \ref{prop-notadd}, and
Theorem \ref{theo-trace}.
\end{proof}
\begin{cor}
If $A$ has purely additive reduction, then $\phi_A'$ is invariant
under isogeny.
\end{cor}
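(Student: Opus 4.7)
The plan is to reduce the isogeny invariance of $\phi_A'$ to the manifestly isogeny-invariant right-hand side $P_{\varphi}(1)$ of the identity in Theorem \ref{theo-trace}. For any abelian $K$-variety $A$ with purely additive reduction, Theorem \ref{theo-trace} supplies the equality $\phi_A'=P_{\varphi}(1)$. If $A'$ is $K$-isogenous to $A$ and also has purely additive reduction, the same formula gives $\phi_{A'}'=P_{\varphi}^{A'}(1)$, so it is enough to verify two things: first, that $A'$ automatically has purely additive reduction; second, that the polynomial $P_{\varphi}(T)$ depends only on the isogeny class of $A$.

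The key input is standard: a $K$-isogeny $f\colon A\to A'$ has finite kernel, hence induces a $G(K^s/K)$-equivariant isomorphism of rational Tate modules $V_\ell A\cong V_\ell A'$. Dualizing produces a Galois-equivariant isomorphism
$$H^1(A\times_K K^s,\Q_\ell)\cong H^1(A'\times_K K^s,\Q_\ell),$$
and taking wild inertia invariants (which commutes with $\varphi$-equivariant isomorphisms) yields an isomorphism of $\varphi$-modules $H^1(A\times_K K^t,\Q_\ell)\cong H^1(A'\times_K K^t,\Q_\ell)$. Consequently $P_{\varphi}^A(T)=P_{\varphi}^{A'}(T)$. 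The criterion of Lenstra--Oort cited in the proof of Proposition \ref{prop-notadd} says that purely additive reduction is equivalent to $P_{\varphi}(1)\neq 0$, so this property is visible on $P_{\varphi}$ and is therefore an isogeny invariant: if $A$ has purely additive reduction, so does $A'$.

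Combining the two observations yields
$$\phi_A'=P_{\varphi}^A(1)=P_{\varphi}^{A'}(1)=\phi_{A'}',$$
which is the desired conclusion. There is no real obstacle here beyond recognizing that Theorem \ref{theo-trace} has already done the hard work by translating the arithmetic invariant $\phi_A'$ into a purely $\ell$-adic cohomological one; the only point that requires a moment of care is checking that the hypothesis of Theorem \ref{theo-trace} transfers from $A$ to $A'$, which is precisely the content of the Lenstra--Oort criterion.
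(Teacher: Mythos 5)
Your argument is correct and is precisely the intended one: the paper states this corollary without proof as an immediate consequence of Theorem \ref{theo-trace}, the point being exactly that $\phi_A'=P_{\varphi}(1)$ and that $P_{\varphi}(T)$, being determined by the rational Tate module $V_\ell A$ with its Galois action, is an isogeny invariant. Your extra check that purely additive reduction itself transfers across the isogeny (via the Lenstra--Oort criterion $P_{\varphi}(1)\neq 0$) is the right point to flag and is handled correctly.
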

\begin{remark}
In the proof of Theorem \ref{theo-trace}, we invoked
\cite[1.8]{liu-lorenzini}. The proof of \cite[1.8]{liu-lorenzini}
is based on \cite[5.6+9]{bosch-xarles}. Unfortunately, it is known
that \cite[4.2]{bosch-xarles} is not correct (see
\cite[4.8(b)]{chai}), and the proofs of \cite[5.6+9]{bosch-xarles}
rely on this result. However, the authors of \cite{liu-lorenzini}
have informed me that they've written a proof of
\cite[1.8]{liu-lorenzini} that avoids the use of the erroneous
part of \cite[4.2]{bosch-xarles}. The proof can be found in
\cite{liu-lorenzini-complement}.
\end{remark}

\section{The monodromy zeta function}
 To conclude
this note, we give an alternative proof of Conjecture
\ref{trace-conj} and Theorem \ref{theo-trace} if $A$ is a Jacobian
$Jac(C)$. The proof is based on an explicit expression for
$P_{\varphi}(T)$ in terms of an $sncd$-model of the curve $C$.

Let $C$ be a smooth projective geometrically connected $K$-curve
of genus $g(C)$. An $sncd$-model of $C$ is a regular flat proper
$R$-model of $C$ whose special fiber is a divisor with strict
normal crossings. Let $\mathcal{C}$ be
 a relatively minimal $sncd$-model of $C$, with
$\mathcal{C}_s=\sum_{i\in I}N_iE_i$. We put
$\delta(C)=gcd\{N_i\,|\,i\in I\}$. For each $i\in I$ we put
$E_i^o=E_i\setminus \cup_{j\neq i}E_j$ and we denote by $d_i$ the
cardinality of $E_i\setminus E_i^o$. Let $A$ be the Jacobian
$Jac(C)$ of $C$. Then $A$ is tamely ramified iff $C$ is
cohomologically tame.

 We denote by
$\zeta_C(T)$ the reciprocal of the monodromy zeta function of $C$,
i.e.,
$$\zeta_C(T)=\prod_{i=0}^{2}\mathrm{det}(T\cdot Id- \varphi\,|\,H^i(X\times_K
K^t,\Q_\ell))^{(-1)^{i+1}}\ \in \Q_\ell(T)$$

\begin{theorem}\label{acampo}
Put $A=Jac(C)$. For each $i\in I$, we denote by $N_i'$ the
prime-to-$p$ part of $N_i$. We have
\begin{eqnarray}\label{eq-zeta1.1}
\zeta_C(T)&=&\prod_{i\in
I}(T^{N'_i}-1)^{-\chi_{\acute{e}t}(E_i^o)}
\\ P_{\varphi}(T)&=&(T-1)^2\prod_{i\in
I}(T^{N'_i}-1)^{-\chi_{\acute{e}t}(E_i^o)} \label{eq-zeta1.2}
\end{eqnarray}
 If $C$ is cohomologically tame, and either $\delta(C)$ is prime to $p$, or $g(C)\neq 1$, then
\begin{eqnarray}\label{eq-zeta2.1}
\zeta_C(T)&=&\prod_{i\in I}(T^{N_i}-1)^{-\chi_{\acute{e}t}(E_i^o)}
\\ P_{\varphi}(T)&=&(T-1)^2\prod_{i\in
I}(T^{N_i}-1)^{-\chi_{\acute{e}t}(E_i^o)} \label{eq-zeta2.2}
\end{eqnarray}
\end{theorem}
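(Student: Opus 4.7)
My plan is to derive all four formulas from one central computation, namely an arithmetic version of A'Campo's formula for the monodromy zeta function of the curve $C$, together with the elementary relation between $\zeta_C(T)$ and $P_{\varphi}(T)$ provided by the identity $H^1(Jac(C)\times_K K^t,\Q_\ell)\cong H^1(C\times_K K^t,\Q_\ell)$.

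First I would reduce the characteristic polynomial statements (eq-zeta1.2) and (eq-zeta2.2) to the zeta function statements (eq-zeta1.1) and (eq-zeta2.1). Since $C$ is geometrically connected, $H^0(C\times_K K^t,\Q_\ell)=\Q_\ell$ with trivial $\varphi$-action. By Poincar\'e duality, $H^2(C\times_K K^t,\Q_\ell)\cong\Q_\ell(-1)$; and because $k$ is algebraically closed, Hensel's lemma applied to $X^n-1$ for $n$ prime to $p$ shows $\mu_n\subset K$, so $\varphi$ acts trivially on $\Q_\ell(-1)$ as well. Combined with the canonical isomorphism $H^1(Jac(C)\times_K K^t,\Q_\ell)\cong H^1(C\times_K K^t,\Q_\ell)$, this yields
$$P_{\varphi}(T)=(T-1)^2\,\zeta_C(T),$$
which reduces (eq-zeta1.2) to (eq-zeta1.1) and (eq-zeta2.2) to (eq-zeta2.1).

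Next I would establish (eq-zeta1.1) by a stalkwise computation of the tame nearby cycles $R\Psi\Q_\ell$ on the sncd-model $\mathcal{C}$. Stratify $\mathcal{C}_s$ by the locally closed pieces $E_i^o$ and the finitely many intersection points $E_i\cap E_j$. On $E_i^o$ the local equation of $\mathcal{C}_s$ is $t^{N_i}=u$ for a uniformizer $t$, and the tame part of the monodromy factors through the cyclotomic quotient of order $N'_i$ (the wild part being invisible at the level of tame cohomology); the trace of $\varphi$ on the alternating sum of the stalks of $R\Psi\Q_\ell$ on this stratum contributes the factor $(T^{N'_i}-1)^{-\chi_{\acute{e}t}(E_i^o)}$ in the zeta function, exactly as in classical A'Campo. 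The intersection points $E_i\cap E_j$ contribute trivially because the local monodromy there has fixed part of Euler characteristic zero. Summing over the stratification, together with multiplicativity of $\zeta_C$ with respect to stratifications coming from the Grothendieck-Ogg-Shafarevich formalism, produces (eq-zeta1.1). (This is essentially the argument used in the author's earlier work cited in the introduction.)

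Finally, for (eq-zeta2.1) it suffices to show that under cohomological tameness, together with the assumption that $\delta(C)$ is prime to $p$ or $g(C)\neq 1$, one has $\chi_{\acute{e}t}(E_i^o)=0$ for every component $E_i$ whose multiplicity $N_i$ is divisible by $p$. Granting this vanishing, the factors $(T^{N'_i}-1)^{-\chi_{\acute{e}t}(E_i^o)}$ and $(T^{N_i}-1)^{-\chi_{\acute{e}t}(E_i^o)}$ both equal $1$ for such $i$, and (eq-zeta2.1) becomes a restatement of (eq-zeta1.1). The vanishing itself is the most delicate step: one has to combine the constraint that cohomological tameness of $C$ forces the eigenvalues of $\varphi$ on $H^1(C\times_K K^t,\Q_\ell)$ to be roots of unity of order prime to $p$, with a combinatorial analysis of the dual graph of $\mathcal{C}_s$ (components of wild multiplicity must occur in chains of $\Pro^1$'s meeting exactly two other components, hence have $\chi_{\acute{e}t}(E_i^o)=0$). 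The conditions on $\delta(C)$ and $g(C)$ are precisely what is needed to exclude the exceptional genus-one configurations where an isolated wild component could carry nontrivial Euler characteristic. This last step, rather than the A'Campo computation, is where I expect the main technical obstacle to lie.
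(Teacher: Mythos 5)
Your overall architecture matches the paper's: reduce the statements about $P_{\varphi}(T)$ to those about $\zeta_C(T)$ via the trivial $\varphi$-action on $H^0$ and $H^2$, prove (\ref{eq-zeta1.1}) by an arithmetic A'Campo computation on the $sncd$-model, and deduce (\ref{eq-zeta2.1}) by showing $\chi_{\acute{e}t}(E_i^o)=0$ whenever $N_i\neq N_i'$. The first two steps are essentially sound as sketched (though note that in the arithmetic setting the multiplicativity of $\zeta_C$ over a stratification of $\mathcal{C}_s$ is not free: the paper justifies it by Abbes's filtration of $R\psi^t_\eta(\Q_\ell)$ with tamely constructible quotients together with the trace formula of \cite[6.1]{Ni-trace}, and your appeal to ``the Grothendieck-Ogg-Shafarevich formalism'' glosses over exactly this point).

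The genuine gap is the step you yourself flag as the main obstacle: the vanishing of $\chi_{\acute{e}t}(E_i^o)$ for components of wild multiplicity. You assert that cohomological tameness forces such components to ``occur in chains of $\Pro^1$'s meeting exactly two other components,'' but you give no argument, and this is not something one can extract by elementary combinatorics from the condition that the wild inertia acts trivially on $H^1$. (Also, the consequence of tameness you do state --- that the eigenvalues of $\varphi$ on $H^1(C\times_K K^t,\Q_\ell)$ are roots of unity of order prime to $p$ --- holds for \emph{any} curve, tame or not, since $G(K^t/K)$ has trivial pro-$p$ part; so it cannot be the input that distinguishes the tame case.) The paper supplies this step by invoking Saito's geometric tameness criterion \cite[3.11]{saito}, which for $g(C)\neq 1$ directly yields $E_i^o\cong\G_{m,k}$ when $N_i\neq N_i'$; and in the remaining case $g(C)=1$ with $\delta(C)$ prime to $p$, it additionally needs \cite[6.6]{liu-lorenzini-raynaud} to compare the model of the torsor $C$ with the minimal model of its Jacobian, combined with the Kodaira--N\'eron reduction table. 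Your proposal neither proves nor cites a substitute for these inputs, and the genus-one case is dismissed in one sentence, so as written the proof of (\ref{eq-zeta2.1})--(\ref{eq-zeta2.2}) is incomplete.
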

\begin{proof}
The expressions for $P_{\varphi}(T)$ follow immediately from the
expressions for $\zeta_C(T)$, since $\varphi$ acts trivially on
the degree $0$ and degree $2$ cohomology of $C$.

 By \cite[3.3]{abbes}, the complex of $\ell$-adic tame
nearby cycles $R\psi_\eta^t(\Q_\ell)$ associated to $\mathcal{C}$
can be endowed with a finite filtration such that the successive
quotients are tamely constructible in the sense of
\cite{Ni-trace}, and such that $\varphi$ has finite order on these
quotients. By \cite[6.1]{Ni-trace}, this implies that
\begin{equation}\label{eq-zetaint}
\zeta_C(T)=\int^{\times}_{\mathcal{C}_s}\zeta(\varphi\,|\,R\psi_\eta^t(\Q_\ell)_{*})
\end{equation}
where $\int^{\times}$ is multiplicative integration w.r.t. the
Euler characteristic, and
$$\zeta(\varphi\,|\,R\psi_\eta^t(\Q_\ell)_{*})$$ is the
constructible function on $\mathcal{C}_s$ mapping a closed point
$x$ of $\mathcal{C}_s$ to
\begin{equation}\label{eq-local}
\prod_{i=0}^{1}\mathrm{det}(T\cdot Id-
\varphi\,|\,R^i\psi_\eta^t(\Q_\ell)_x)^{(-1)^{i+1}} \in
\Q_\ell(T)\end{equation}
 Using the
local computation of the tame nearby cycles in \cite[I.3.3]{sga7a}
we see that the expression (\ref{eq-local}) equals
$(T^{N_i'}-1)^{-1}$ if $x\in E_i^o$ for some $i\in I$, and $1$
else. Computing (\ref{eq-zetaint}), we obtain the formula
(\ref{eq-zeta1.1}) for the zeta function (compare to A'Campo's
formula \cite{A'C} in the complex geometric setting).

Now assume that $C$ is cohomologically tame. If $g(C)\neq 1$, then
Saito's geometric tameness criterion \cite[3.11]{saito} implies
that $E_i^o\cong \mathbb{G}_{m,k}$ if $N_i\neq N'_i$, and hence
$\chi_{\acute{e}t}(E_i^o)=0$. Therefore,
 (\ref{eq-zeta2.1}) follows from (\ref{eq-zeta1.1}).

Finally, assume that $C$ is cohomologically tame, $g(C)= 1$, and
$\delta(C)$ is prime to $p$.
 By \cite[6.6]{liu-lorenzini-raynaud}, we know that $\delta(C)$ is
the order of the class of the torsor $C$ in $H^1(K,A)$, and that
the type of the model $\mathcal{C}$ is $\delta(C)$ times the type
of the minimal $sncd$-model of $A$. Combining
  Saito's criterion \cite[3.11]{saito} with the Kodaira-N\'eron
  reduction table for elliptic curves, we see that tameness of $C$
  implies that $\chi_{\acute{e}t}(E_i^o)=0$ for each $i\in I$ such that
  $N_i\neq N'_i$.
%
%
\end{proof}
\begin{remark}
Assume that $g(C)=1$, and denote by $\delta'(C)$ the prime-to-$p$
part of $\delta(C)$. If $A=Jac(C)$, then
$$\zeta_C(T)=\zeta_A(T)$$
On the other hand, (\ref{eq-zeta1.1}) and
\cite[6.6]{liu-lorenzini-raynaud} yield that
$$\zeta_C(T)=\zeta_A(T^{\delta'(C)})$$
So, either $\delta'(C)=1$, or $\zeta_A(T)=1$, i.e. $A$ has
semi-abelian reduction. We recover the well-known fact that
$H^1(K,E)$ is a $p$-group if $E$ is an elliptic curve with
additive reduction.
\end{remark}

As an immediate corollary, we obtain an alternative proof of
Theorem 2.1(i) in \cite{lorenzini-charpol}. Denote by $g(E_i)$ the
genus of $E_i$, for each $i\in I$. We put $a=\sum_{i\in I}g(E_i)$.
We denote by $\Gamma$ the dual graph of $\mathcal{C}_s$, and by
$t$ its first Betti number.
\begin{cor}[Lorenzini,
\cite{lorenzini-charpol}]\label{cor-lorenzini}  We have
$$P_{\varphi}(T)=(T-1)^{2a+2t}\prod_{i\in
I}\left(\frac{T^{N'_i}-1}{T-1}\right)^{2g(E_i)+d_i-2}$$
\end{cor}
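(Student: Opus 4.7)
The corollary follows by a purely combinatorial manipulation of formula (\ref{eq-zeta1.2}) in Theorem \ref{acampo}. The plan is to first compute $\chi_{\acute{e}t}(E_i^o)$ explicitly in terms of $g(E_i)$ and $d_i$, then reorganize the product in (\ref{eq-zeta1.2}) by factoring one copy of $(T-1)$ out of each term $T^{N'_i}-1$, and finally use standard graph-theoretic identities relating the degrees $d_i$ to the first Betti number $t$ of the dual graph $\Gamma$.

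Concretely, since $\mathcal{C}$ is an $sncd$-model, every $E_i$ is a smooth projective curve of genus $g(E_i)$, and by definition $E_i^o=E_i\setminus(E_i\cap\bigcup_{j\neq i}E_j)$ is obtained from $E_i$ by removing $d_i$ closed points (each intersection with another component is a single transverse crossing, contributing exactly one point). Hence
$$\chi_{\acute{e}t}(E_i^o)=(2-2g(E_i))-d_i,$$
and so $-\chi_{\acute{e}t}(E_i^o)=2g(E_i)+d_i-2$. Substituting this into (\ref{eq-zeta1.2}) and writing $T^{N'_i}-1=(T-1)\cdot\frac{T^{N'_i}-1}{T-1}$, one obtains
$$P_{\varphi}(T)=(T-1)^{2+\sum_{i\in I}(2g(E_i)+d_i-2)}\prod_{i\in I}\left(\frac{T^{N'_i}-1}{T-1}\right)^{2g(E_i)+d_i-2}.$$

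It remains to match the exponent of $(T-1)$ with $2a+2t$. By definition $\sum_{i\in I}2g(E_i)=2a$, so the task reduces to proving $\sum_{i\in I}(d_i-2)=2t-2$. This is the handshake lemma: each intersection of two distinct components contributes one edge of $\Gamma$ and is counted once in each of the two relevant $d_i$, so $\sum_{i\in I}d_i=2\,|E(\Gamma)|$. Since $\mathcal{C}$ is a proper flat model of the geometrically connected curve $C$ over the strictly henselian base $\Spec R$, its special fiber $\mathcal{C}_s$ is connected and therefore so is $\Gamma$; the first Betti number then satisfies $t=|E(\Gamma)|-|I|+1$. Combining, $\sum_{i\in I}(d_i-2)=2(t+|I|-1)-2|I|=2t-2$, yielding the desired total exponent $2+2a+2t-2=2a+2t$. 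The argument is essentially bookkeeping; the only points requiring care are the connectedness of $\Gamma$ and the absence of self-edges, both of which follow from the $sncd$ hypothesis and the geometric connectedness of $C$.
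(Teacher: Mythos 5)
Your proof is correct and follows exactly the paper's route: substitute $-\chi_{\acute{e}t}(E_i^o)=2g(E_i)+d_i-2$ into (\ref{eq-zeta1.2}) and use the identity $\sum_{i\in I}(2-d_i)=2-2t$ (twice the Euler characteristic of the connected dual graph $\Gamma$) to identify the exponent of $T-1$. You merely spell out the bookkeeping that the paper leaves implicit.
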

\begin{proof}
This follows immediately from Theorem \ref{acampo}, the formula
$\chi_{top}(E_i^o)=2-2g(E_i)-d_i$, and the fact that
$2-2t=\sum_{i\in I}(2-d_i)$ (this is twice the Euler
characteristic of $\Gamma$).
\end{proof}
In \cite{lorenzini-charpol}, it was assumed that $\delta(C)=1$,
but our arguments show that this is not necessary. If
$\delta(C)=1$, then $a$ equals the abelian rank of $A=Jac(C)$ and
$t$ its toric rank (see \cite[p.\,148]{Lorenzini-jac}). We obtain
an alternative proof  of Conjecture \ref{trace-conj} and Theorem
\ref{theo-trace}.
\begin{cor}
Assume that $\delta(C)=1$, and put $A=Jac(C)$.
\begin{enumerate}
\item If $A$ does not have purely additive reduction, then
$P_{\varphi}(1)=0$. If, moreover, $A$ is tamely ramified, then
(Trace) holds for $A$.

\item If $A$ has purely additive reduction, then
$\phi'_A=P_{\varphi}(1)$. If, moreover, $A$ is tamely ramified,
then $\phi_A=\phi'_A$ and (Trace) holds for $A$.
\end{enumerate}
\end{cor}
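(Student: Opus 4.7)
The plan is to specialise the explicit formula of Corollary~\ref{cor-lorenzini} at $T=1$ and combine it with the results of Section~2. The hypothesis $\delta(C)=1$ enters only through the identification (quoted just above) of $a$ with the abelian rank and $t$ with the toric rank of $A=Jac(C)$, so that $A$ is purely additive precisely when $a=t=0$.

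For part~(1), I would argue as follows. Each auxiliary factor $(T^{N_i'}-1)/(T-1)$ specialises to the positive integer $N_i'$ at $T=1$, so the product $\prod_{i\in I}\bigl((T^{N_i'}-1)/(T-1)\bigr)^{2g(E_i)+d_i-2}$ is a finite, nonzero rational number there. Hence the leading factor $(T-1)^{2a+2t}$ in Corollary~\ref{cor-lorenzini} forces $P_\varphi(1)=0$ as soon as $a+t>0$. Under the additional assumption that $A$ is tamely ramified, the trace formula then drops out from Proposition~\ref{prop-notadd}, whose left-hand side is handled by Corollary~\ref{varphi}.

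For part~(2), when $a=t=0$ the same evaluation collapses Corollary~\ref{cor-lorenzini} to $P_\varphi(1)=\prod_{i\in I}(N_i')^{2g(E_i)+d_i-2}$, which is a nonzero integer. To identify this with $\phi_A'$ I would invoke Theorem~\ref{theo-trace}; the additional tame assertions $\phi_A=\phi_A'$ and (Trace) then follow from Theorem~\ref{theo-trace} together with Corollary~\ref{varphi}.

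The hardest point to get right is simply that the exponents $2g(E_i)+d_i-2$ may be negative, so the factors in Corollary~\ref{cor-lorenzini} are only rational functions of $T$. The key observation needed to control the specialisation at $T=1$ is that each $(T^{N_i'}-1)/(T-1)$ is nonzero there, so the overall expression is well-defined and equals zero precisely when $(T-1)^{2a+2t}$ vanishes. No deeper obstacle arises; the strategic subtlety is that the argument for part~(2) still routes through Theorem~\ref{theo-trace}, so any reader wanting to bypass the Tate-module computation of Section~2 would instead need to match $\prod_i(N_i')^{2g(E_i)+d_i-2}$ with a purely geometric formula for $\phi_A'$ coming from Raynaud's description of component groups of Jacobians.
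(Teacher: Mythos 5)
Your part (1) is essentially the paper's argument: read off from Corollary~\ref{cor-lorenzini} that $1$ is a root of $P_\varphi(T)$ of order exactly $2a+2t$ (the remaining factors evaluate to $\prod_i (N_i')^{2g(E_i)+d_i-2}$, a positive rational), use $\delta(C)=1$ to identify $a$ and $t$ with the abelian and toric ranks, and finish with Corollary~\ref{varphi}. Part (2), however, while logically valid, defeats the purpose of the corollary: the paper introduces it precisely as an \emph{alternative} proof of Theorem~\ref{theo-trace} for Jacobians, so quoting Theorem~\ref{theo-trace} to identify $\prod_i(N_i')^{d_i-2}$ with $\phi_A'$ makes the explicit evaluation of $P_\varphi(1)$ pointless and reduces the statement to a restatement of that theorem. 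The ingredient you flag at the end as what "a reader wanting to bypass the Tate-module computation" would need is exactly what the paper supplies: Lorenzini's formula $\phi_A=\prod_{i\in I}N_i^{d_i-2}$ for the component group of a Jacobian of toric rank zero (\cite[1.5]{Lorenzini-jac}), which compared with $P_\varphi(1)=\prod_i(N_i')^{d_i-2}$ (the exponents are $d_i-2$ since $a=0$ forces $g(E_i)=0$ for all $i$) yields $\phi_A'=P_\varphi(1)$ with no recourse to Section~2. In the tame case the paper then gets $\phi_A=\phi_A'$ from Saito's criterion ($d_i=2$ whenever $N_i\neq N_i'$, so the two products coincide), rather than from \cite[1.8]{liu-lorenzini} as in the proof of Theorem~\ref{theo-trace}; this is a genuine gain, since the Remark following that theorem notes that the published proof of \cite[1.8]{liu-lorenzini} relied on a flawed step in Bosch--Xarles. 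So your route establishes the stated equalities, but only by leaning on the very theorem this corollary is meant to reprove, and it forfeits the independent verification of $\phi_A=\phi_A'$.
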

\begin{proof}
%
(1) It follows from Corollary \ref{cor-lorenzini} that the order
of $1$ as a root of $P_{\varphi}(T)$ equals $2a+2t$. Hence, if $A$
does not have purely additive reduction, then $P_{\varphi}(1)=0$.
Combining this with Corollary \ref{varphi}, we see that (Trace)
holds for $A$ if $A$ is tamely ramified.

 (2)  By
\cite[1.5]{Lorenzini-jac}, we have $$\phi_A=\prod_{i\in
I}N_i^{d_i-2}$$ because the toric rank of $A$ is zero. By
Corollary \ref{cor-lorenzini} we know that
$$P_{\varphi}(T)=\prod_{i\in
I}\left(\frac{T^{N'_i}-1}{T-1}\right)^{d_i-2}$$ because $a=t=0$.
This yields
$$P_{\varphi}(1)=\prod_{i\in I}(N'_i)^{d_i-2}=\phi'_A$$
If $A$ is tamely ramified, then Saito's criterion
\cite[3.11]{saito} implies that $d_i=2$ if $N_i\neq N_i'$, so that
$$\phi'_A=\prod_{i\in I}(N'_i)^{d_i-2}=\prod_{i\in
I}N_i^{d_i-2}=\phi_A$$ Combining this with Corollary \ref{varphi},
we see that (Trace) holds for $A$.
%
\end{proof}
\section*{Acknowledgements}
\noindent The author is endebted to Q. Liu and D. Lorenzini for
valuable
 suggestions.


\begin{thebibliography}{10}

\bibitem{sga7a}
{\em Groupes de monodromie en g\'eom\'etrie alg\'ebrique. {I}}.
\newblock Springer-Verlag, Berlin, 1972.
\newblock S\'eminaire de G\'eom\'etrie Alg\'ebrique du Bois-Marie 1967--1969
  (SGA 7 {I}), Dirig\'e par A. Grothendieck. Avec la collaboration de M.
  Raynaud et D. S. Rim, Lecture Notes in Mathematics, Vol. 288.

\bibitem{abbes}
A.~Abbes.
\newblock {R\'eduction semi-stable de courbes d'apr\`es Artin, Deligne,
  Grothendieck, Mumford, Saito, Winters...}
\newblock In J.-B. Bost, F.~Loeser, and M.~Raynaud, editors, {\em Courbes
  semi-stables et groupe fondamental en g\'eom\'etrie alg\'ebrique}, volume 187
  of {\em Prog. Math.}, pages 59--110. Basel: Birkh{\"a}user, 2000.

\bibitem{A'C}
N.~A'Campo.
\newblock La fonction z\^eta d'une monodromie.
\newblock {\em Comment. Math. Helvetici}, 50:233--248, 1975.

\bibitem{neron}
S.~Bosch, W.~{L\"u}tkebohmert, and M.~Raynaud.
\newblock {\em {N\'eron models}}, volume~21 of
 {\em Ergebnisse der Mathematik und ihrer Grenzgebiete}. \newblock Springer-Verlag, 1990.

\bibitem{bosch-xarles}
S.~Bosch and X.~Xarles.
\newblock {Component groups of N{\'e}ron models via rigid uniformization}.
\newblock {\em Math. Ann.}, 306:459--486, 1996.

\bibitem{chai}
C.L. Chai.
\newblock {N\'eron models for semiabelian varieties: congruence and change of
  base field}.
\newblock {\em Asian J. Math.}, 4(4):715--736, 2000.

\bibitem{edix-liu-lor}
B.~Edixhoven, Q.~Liu, and D.~Lorenzini.
\newblock {The $p$-part of the group of components of a N\'eron model.}
\newblock {\em J. Algebr. Geom.}, 5(4):801--813, 1996.

\bibitem{Lenstra-Oort}
H.W. Lenstra and F.~Oort.
\newblock {Abelian varieties having purely additive reduction.}
\newblock {\em J. Pure Appl. Algebra}, 36:281--298, 1985.

\bibitem{liu-lorenzini}
Q.~Liu and D.~Lorenzini.
\newblock {Special fibers of N\'eron models and wild ramification.}
\newblock {\em J. Reine Angew. Math.}, 532:179--222, 2001.

\bibitem{liu-lorenzini-complement}
Q.~Liu and D.~Lorenzini.
\newblock {A complement to Proposition 1.8 of ``Special fibers of N\'eron models and wild ramification''.}
\newblock{www.math.uga.edu/$\sim$lorenz/ComplementToProposition1.8.pdf,
consulted on May 4, 2009.}






\bibitem{liu-lorenzini-raynaud}
Q.~Liu, D.~Lorenzini, and M.~Raynaud.
\newblock {N\'eron models, Lie algebras, and reduction of curves of genus one.}
\newblock {\em Invent. Math.}, 157(3):455--518, 2004.

\bibitem{motrigid}
F.~Loeser and J.~Sebag.
\newblock {Motivic integration on smooth rigid varieties and invariants of
  degenerations}.
\newblock {\em Duke Math. J.}, 119:315--344, 2003.

\bibitem{lorenzini-charpol}
D.~Lorenzini.
\newblock {The characteristic polynomial of a monodromy transformation attached
  to a family of curves.}
\newblock {\em Comment. Math. Helv.}, 68(1):111--137, 1993.

\bibitem{Lorenzini-jac}
D.~J. Lorenzini.
\newblock {Groups of components of N\'eron models of Jacobians.}
\newblock {\em Compos. Math.}, 73(2):145--160, 1990.

\bibitem{Milne}
J.~S. Milne.
\newblock {\em \'Etale Cohomology}, volume~33 of {\em Princeton Mathematical
  Series}.
\newblock Princeton University Press, 1980.

\bibitem{Ni-tracevar}
J.~Nicaise.
\newblock {A trace formula for varieties over a discretely valued field}.
\newblock {\em preprint}, 2008, arxiv:0805.1323v2.

\bibitem{Ni-trace}
J.~Nicaise.
\newblock A trace formula for rigid varieties, and motivic weil generating
  series for formal schemes.
\newblock {\em Math. Ann.}, 343(2):285--349, 2009.

\bibitem{saito}
Takeshi Saito.
\newblock {Vanishing cycles and geometry of curves over a discrete valuation
  ring.}
\newblock {\em Am. J. Math.}, 109:1043--1085, 1987.

\bibitem{serre-groupalg}
J.-P. Serre.
\newblock {\em {Groupes alg{\'e}briques et corps de classes}}.
\newblock Hermann, Paris, 1959.

\end{thebibliography}
\end{document}